%
%
\documentclass[12pt]{amsart}
\usepackage{amsfonts, amssymb}
\usepackage{parskip, fullpage, verbatim}
\usepackage[colorlinks=true]{hyperref}
\usepackage{breakurl}


\newcommand\CA{{\mathcal A}}

\newcommand\BBC{{\mathbb C}}
\newcommand\BBK{{\mathbb K}}

\newcommand\BBZ{{\mathbb Z}}

\newcommand {\GAP}{\textsf{GAP}}  
\newcommand {\CHEVIE}{\textsf{CHEVIE}}  
\newcommand {\Singular}{\textsf{SINGULAR}}


\newcommand\Der{{\operatorname{Der}}}

\newcommand\Fix{{\operatorname{Fix}}}

\newcommand\pdeg{\operatorname{pdeg}}




\numberwithin{equation}{section}

\theoremstyle{plain}

\newtheorem{theorem}[equation]{Theorem}

\theoremstyle{definition}

\thanks{We acknowledge 
support from the DFG-priority program 
SPP1489 ``Algorithmic and Experimental Methods in
Algebra, Geometry, and Number Theory''.}

\subjclass[2010]{20F55, 52C35, 14N20}

\begin{document}

\title[Ziegler's Multi-Reflection Arrangements are free]
{Ziegler's Multi-Reflection Arrangements are free}


\author[T. Hoge]{Torsten Hoge}
\address
{Institut f\"ur Algebra, Zahlentheorie und Diskrete Mathematik,
Fakult\"at f\"ur Mathematik und Physik,
Leibniz Universit\"at Hannover,
Welfengarten 1,
30167 Hannover, Germany}
\email{hoge@math.uni-hannover.de}

\author[G. R\"ohrle]{Gerhard R\"ohrle}
\address
{Fakult\"at f\"ur Mathematik,
Ruhr-Universit\"at Bochum,
D-44780 Bochum, Germany}
\email{gerhard.roehrle@rub.de}

\keywords{
Multi-arrangement,
reflection arrangement, 
free arrangement, complex reflection group}

\allowdisplaybreaks

\begin{abstract}
In 1989, Ziegler introduced 
the concept of a multi-arrangement.
One natural example is the reflection 
arrangement of a unitary
reflection group with multiplicity 
given by the number of reflections
associated with each hyperplane.
For all but three  irreducible groups, 
Ziegler showed that each such 
multi-reflection arrangement
is free.
We complete Ziegler's example 
by confirming these outstanding cases.
\end{abstract}

\maketitle



\section{Introduction}

In his seminal work \cite{ziegler:multiarrangements}, Ziegler
introduced the concept of a multi-arrangement
generalizing the notion of a hyperplane arrangement.
A natural example of such 
a multi-arrangement is 
the reflection arrangement of an irreducible unitary
reflection group with multiplicity 
given by the number of reflections
associated with each hyperplane.
Ziegler showed in \cite{ziegler:multiarrangements}
that each such 
multi-reflection arrangement is free with the 
possible exception of 
just three instances.
In this short note we revisit Ziegler's example 
and show by computational means that 
these remaining cases are also free 
in Theorem \ref{thm:ziegler}.

Ever since Ziegler's introduction of multi-arrangements, 
the subject flourished. 
In particular, the question of freeness of 
multi-arrangements 
is a very active field of research, 
e.g.\ see the recent work 
\cite{abeteraowakefield:euler} 
and \cite{yoshinaga:free} and the references therein.

\subsection{Multi-Arrangements}
Let $\BBK$ be a field and let $V = \BBK^\ell$.
Let $\CA = (\CA, V)$ be a central $\ell$-arrangement of hyperplanes 
in $V$.
A \emph{multi-arrangement}  is a pair
$(\CA, \nu)$ consisting of a hyperplane arrangement $\CA$ and a 
\emph{multiplicity} function
$\nu : \CA \to \BBZ_{\ge 0}$ associating 
to each hyperplane $H$ in $\CA$ a non-negative integer $\nu(H)$.
Alternately, the multi-arrangement $(\CA, \nu)$ can also be thought of as
the multi-set of hyperplanes
\[
(\CA, \nu) = \{H^{\nu(H)} \mid H \in \CA\}.
\]

The order of $\CA$ is the cardinality $|\CA|$ of the set $\CA$ and
the \emph{order} of the multi-arrangement $(\CA, \nu)$ 
is the cardinality 
of the multi-set $(\CA, \nu)$, we write 
$|\nu| := |(\CA, \nu)| = \sum_{H \in \CA} \nu(H)$.
For a multi-arrangement $(\CA, \nu)$, the underlying 
arrangement $\CA$ is sometimes called the associated 
\emph{simple} arrangement, and so $(\CA, \nu)$ itself is  
simple if and only if $\nu(H) = 1$ for each $H \in \CA$. 

\subsection{Freeness of Arrangements and Multi-Arrangements}

Let $S = S(V^*)$ be the symmetric algebra of the dual space $V^*$ of $V$.
If $x_1, \ldots , x_\ell$ is a basis of $V^*$, then we identify $S$ with 
the polynomial ring $\BBK[x_1, \ldots , x_\ell]$.
Letting $S_p$ denote the $\BBK$-subspace of $S$
consisting of the homogeneous polynomials of degree $p$ (along with $0$),
$S$ is naturally $\BBZ$-graded: $S = \oplus_{p \in \BBZ}S_p$, where
$S_p = 0$ in case $p < 0$.

Let $\Der(S)$ be the $S$-module of algebraic $\BBK$-derivations of $S$.
For $i = 1, \ldots, \ell$, 
let $D_i := \partial/\partial x_i$ be the usual derivation of $S$.
Then $D_1, \ldots, D_\ell$ is an $S$-basis of $\Der(S)$.
We say that $\theta \in \Der(S)$ is 
\emph{homogeneous of polynomial degree p}
provided 
$\theta = \sum_{i=1}^\ell f_i D_i$, 
where $f_i \in S_p$ for each $1 \le i \le \ell$.
In this case we write $\pdeg \theta = p$.
Let $\Der(S)_p$ be the $\BBK$-subspace of $\Der(S)$ consisting 
of all homogeneous derivations of polynomial degree $p$
(along with $0$).
So $\Der(S)$ is a graded $S$-module:
$\Der(S) = \oplus_{p\in \BBZ} \Der(S)_p$.

Let $\CA$ be an arrangement in $V$. 
Then for $H \in \CA$ we fix $\alpha_H \in V^*$ with
$H = \ker(\alpha_H)$.
The \emph{defining polynomial} $Q(\CA)$ of $\CA$ is given by 
$Q(\CA) := \prod_{H \in \CA} \alpha_H \in S$.

The \emph{module of $\CA$-derivations} of $\CA$ is 
defined by 
\[
D(\CA) := \{\theta \in \Der(S) \mid \theta(\alpha_H) \in \alpha_H S
\text{ for each } H \in \CA \} .
\]
We say that $\CA$ is \emph{free} if the module of $\CA$-derivations
$D(\CA)$ is a free $S$-module.

With the $\BBZ$-grading of $\Der(S)$, the module of $\CA$-derivations
becomes a graded $S$-module $D(\CA) = \oplus_{p\in \BBZ} D(\CA)_p$,
where $D(\CA)_p = D(\CA) \cap \Der(S)_p$, 
\cite[Prop.\ 4.10]{orlikterao:arrangements}.
If $\CA$ is a free arrangement, then the $S$-module 
$D(\CA)$ admits a basis of $\ell$ homogeneous derivations, 
say $\theta_1, \ldots, \theta_\ell$, \cite[Prop.\ 4.18]{orlikterao:arrangements}.
While the $\theta_i$'s are not unique, their polynomial 
degrees $\pdeg \theta_i$ 
are unique (up to ordering). This multiset is the set of 
\emph{exponents} of the free arrangement $\CA$
and is denoted by $\exp \CA$.

Following Ziegler \cite{ziegler:multiarrangements},
we extend this notion of freeness to multi-arrangements.
The \emph{defining polynomial} $Q(\CA, \nu)$ 
of the multi-arrangement $(\CA, \nu)$ is given by 
\[
Q(\CA, \nu) := \prod_{H \in \CA} \alpha_H^{\nu(H)},
\] 
a polynomial of degree $|\nu|$ in $S$.

The \emph{module of $\CA$-derivations} of $(\CA, \nu)$ is 
defined by 
\[
D(\CA, \nu) := \{\theta \in \Der(S) \mid \theta(\alpha_H) \in \alpha_H^{\nu(H)} S 
\text{ for each } H \in \CA\}.
\]
We say that $(\CA, \nu)$ is \emph{free} if 
$D(\CA, \nu)$ is a free $S$-module, \cite[Def.\ 6]{ziegler:multiarrangements}.

As in the case of simple arrangements,
if $(\CA, \nu)$ is free, there is a 
homogeneous basis $\theta_1, \ldots, \theta_\ell$ of $D(\CA, \nu)$.
The multi-set of the unique polynomial degrees $\pdeg \theta_i$ 
are the \emph{multi-exponents} of the free multi-arrangement $(\CA, \nu)$
and is denoted by $\exp (\CA, \nu)$.
It follows from Ziegler's analogue of Saito's criterion 
\cite[Thm.\ 8]{ziegler:multiarrangements} that 
$\sum \pdeg \theta_i = \deg Q(\CA, \nu) = |\nu|$.

As is the case for simple arrangements, if $\ell$ is at most $2$,
then $(\CA, \nu)$ is free, 
\cite[Cor.\ 7]{ziegler:multiarrangements}.

\section{Ziegler's Multi-Arrangement for Unitary Reflection Groups}

Now let $\BBK = \BBC$, the complex numbers. 
Suppose that $W$ is a finite, unitary
reflection group acting on the complex 
vector space $V$.
Let $\CA(W) = (\CA(W),V)$ be the associated 
hyperplane arrangement of $W$, the
\emph{reflection arrangement} of $W$.
For $w \in W$, we write 
$\Fix(w) :=\{ v\in V \mid w v = v\}$ for 
the fixed point subspace of $w$.
We use the classification and 
labeling of the irreducible 
unitary reflection groups due to
Shephard and Todd, \cite{shephardtodd}. 

Ziegler defined the multi-arrangement $(\CA(W), \varrho)$ of $W$, 
with the \emph{reflection multiplicity} $\varrho$, i.e. 
\[
\varrho(H) := |\{ w \in W \mid \Fix(w) = H\}|
\] 
is the number of
pseudo-reflections having $H$ as fixed point hyperplane.
So  $|\varrho|$ is the number of reflections in $W$ and 
the defining polynomial $Q(\CA(W), \varrho)$ of 
$(\CA(W), \varrho)$ is 
the determinant of the Jacobian of a fixed set of basic invariants of $W$, cf.\ 
\cite[Thm.\ 6.42]{orlikterao:arrangements}.

Our aim is to complete the proof of the following 

\begin{theorem}
\label{thm:ziegler}
For $W$ a  finite, unitary reflection group,  
the multi-arrangement  $(\CA(W), \varrho)$ of $W$ is free.
\end{theorem}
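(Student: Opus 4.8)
The plan is to reduce to the case where $W$ is irreducible and then dispatch the remaining open cases by explicit computation. First I would reduce to irreducible $W$. If $W = W_1 \times \cdots \times W_k$ acts on $V = V_1 \oplus \cdots \oplus V_k$, then every reflection of $W$ acts as a reflection in exactly one factor $W_i$ and trivially on the complementary summands, so its fixed hyperplane has the form $V_1 \oplus \cdots \oplus H_i \oplus \cdots \oplus V_k$ with $H_i \in \CA(W_i)$. Hence $\CA(W)$ is the product of the $\CA(W_i)$ and the reflection multiplicity $\varrho$ restricts to the reflection multiplicity of each factor. Since the module of derivations of a product of multi-arrangements decomposes as the corresponding direct sum, a product of free multi-arrangements is again free; thus it suffices to treat the irreducible case.

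For irreducible $W$ I invoke the Shephard--Todd classification. When $W$ is a Coxeter group every reflection hyperplane carries a single reflection, so $\varrho \equiv 1$ and $(\CA(W), \varrho)$ is the simple reflection arrangement, which is free by Terao's theorem. For the remaining irreducible groups Ziegler already established freeness of $(\CA(W), \varrho)$ in all but three instances, all of rank $\ge 3$ (recall that $\ell \le 2$ is automatically free). The entire theorem therefore reduces to verifying freeness for these three outstanding groups.

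To settle each remaining case I would use Ziegler's analogue of Saito's criterion: a family of $\ell$ homogeneous derivations $\theta_1, \ldots, \theta_\ell \in D(\CA(W), \varrho)$ forms an $S$-basis, and hence witnesses freeness, precisely when the determinant of their coefficient matrix is a nonzero scalar multiple of $Q(\CA(W), \varrho)$, equivalently when $\sum_i \pdeg \theta_i = |\varrho|$. Concretely, I would realize $W$ by explicit matrices and compute the linear forms $\alpha_H$ together with the multiplicities $\varrho(H)$; then compute generators of the $S$-module $D(\CA(W), \varrho)$ by solving the divisibility conditions $\theta(\alpha_H) \in \alpha_H^{\varrho(H)} S$ as a kernel or syzygy computation over $S$ in a computer algebra system; and finally extract $\ell$ homogeneous candidates and check the Saito determinant condition. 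A positive match certifies freeness and simultaneously records the multi-exponents $\exp(\CA(W), \varrho)$.

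The main obstacle is computational cost rather than conceptual difficulty. For these exceptional groups the degree $|\varrho|$, the number of reflections, is large, so $Q(\CA(W), \varrho)$ is a polynomial of high degree in $\ell$ variables and the derivation module is cut out by many high-order divisibility conditions. Organizing the kernel and determinant computations within the ambient polynomial ring is the delicate part: one must exploit the $\BBZ$-grading to work degree by degree and take advantage of the $W$-symmetry of $D(\CA(W), \varrho)$ to keep the linear algebra over $S$ tractable. Once a homogeneous basis is produced and Ziegler's criterion is verified in each of the three cases, freeness follows and the theorem is complete.
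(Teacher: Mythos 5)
Your proposal is correct and follows essentially the same route as the paper: reduce to irreducible $W$ via the product decomposition, cite Terao for the Coxeter/simple cases and Ziegler for the rank $\le 2$ and monomial cases, and then settle the three outstanding exceptional groups ($G_{25}$, $G_{26}$, $G_{32}$) by explicit computer computation of $D(\CA(W),\varrho)$. The only cosmetic difference is that the paper certifies freeness via \Singular's module-theoretic functionality rather than by explicitly checking the Saito determinant, but both amount to the same computational verification.
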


\begin{proof}
A product of multi-arrangements is free if and only if each factor is free:
using \cite[Lem.\ 1.3]{abeteraowakefield:euler}, 
the proof of \cite[Thm.\ 4.28]{orlikterao:arrangements}
readily extends to multi-arrangements, thanks to 
Ziegler's analogue of Saito's criterion 
\cite[Thm.\ 8]{ziegler:multiarrangements}.
Thus we may assume that $W$ is irreducible.

All but three cases were proved by Ziegler in 
\cite{ziegler:multiarrangements}.
If $W$ is generated by pseudo-reflections of order 2, e.g. if $W$ is a Coxeter group,
then  $\varrho \equiv 1$.
Thus in these instances 
$(\CA(W), \varrho) = \CA(W)$ is simple. In these cases
$\CA(W)$ is known to be free, thanks to 
Terao's work, \cite{terao:freeI}.
If $W$ is cyclic or of rank $2$, then $(\CA(W), \varrho)$ is free,
by \cite[Cor.\ 7]{ziegler:multiarrangements}.
Also for the monomial groups
$W = G(r,p,\ell)$,  Ziegler showed that 
$(\CA(W), \varrho)$ is free, \cite[Ex.\ 15]{ziegler:multiarrangements}.

So the question about freeness of $(\CA(W), \varrho)$ is only outstanding
for the three exceptional groups $W = G_{25}, G_{26}$ and $G_{32}$.
Both $G_{25}$ and $G_{32}$ are generated by pseudo-reflections of order $3$,
while $G_{26}$ admits $9$ pseudo-reflections of order $2$ and $24$ of order $3$.
Therefore, we have 
$|\CA(W)| = 12, 21, 40$
and 
$|(\CA(W), \varrho)| = 24, 33, 80$, respectively.

Our proof of these remaining cases for 
Theorem \ref{thm:ziegler} is computational.
First we use the functionality for complex reflection groups 
provided by the   \CHEVIE\ package in   \GAP\ 
(and some \GAP\ code by J.~Michel)
(see \cite{gap3} and \cite{chevie})
in order to obtain explicit 
linear functionals $\alpha_H$ so that 
$H = \ker \alpha_H$ for the underlying reflection arrangement
$\CA(W)$. 
These then allow us to implement the 
$S$-module  
$D(\alpha_H, \varrho) 
:= \{\theta \in \Der(S) \mid \theta(\alpha_H) \in \alpha_H^{\varrho(H)} S \}$
associated with $\alpha_H$
in the   \Singular\ computer algebra system (cf.~\cite{singular}). 
Then the
module theoretic functionality of
  \Singular\ is used to show that the
modules of derivations in question 
\[
D(\CA(W), \varrho) = \cap_{H \in \CA(W)} D(\alpha_H, \varrho)
\]
are free.
In particular, 
for $W = G_{25}, G_{26}$ and $G_{32}$, 
the multi-exponents are
$\exp (\CA(W), \varrho) =\{8,8,8\}, \{10,10,13\}$ and $\{20,20,20,20\}$,
respectively.
As an illustration, 
we give explicit $S$-bases 
for $D(\CA(G_{25}),\varrho)$ and  
$D(\CA(G_{26}),\varrho)$
in the next section.
Not unexpectedly, they are not particularly enlightening.
\end{proof}

Note that
even though the simple arrangement $\CA(W)$ is free
\cite{terao:freeI}, for  
an arbitrary multiplicity $\nu$ of $\CA(W)$, 
the multi-arrangement $(\CA(W), \nu)$ 
need not be free in general, cf.\ 
\cite[Ex.\ 5.13]{abeteraowakefield:euler}.

While our calculations 
combined with the existing known instances determined by Ziegler
do provide a proof of Theorem \ref{thm:ziegler}, it would  
nevertheless be very desirable to have a uniform,  conceptual
proof free of case-by-case considerations and free of 
computer calculations.

\section{Defining Polynomials and Bases of $D(\CA(W),\varrho)$}

To illustrate our computations, 
we list explicit $S$-bases for $D(\CA(G_{25}),\varrho)$ and  
$D(\CA(G_{26}),\varrho)$. 
Let $x, y$, and $z$ be the indeterminates of $S$, 
$D_x = \partial/\partial x, D_y = \partial/\partial y$, 
$D_z = \partial/\partial z$, and let
$\zeta$ be a primitive $3$rd root of unity.

\begin{align*}
Q(\CA(G_{25}), \varrho) = & \  Q(G_{25})^3 = 
\big(x y z (x + y + z) (x + y + \zeta z) (x + y - (\zeta+1) z)   \\
  & (x + \zeta y + z) (x + \zeta y + \zeta z) (x + \zeta y - (\zeta+1) z) \\
  & (x - (\zeta+1) y + z) (x - (\zeta+1) y + \zeta z) (x - (\zeta+1) y - (\zeta+1) z) \big)^{3}.
\end{align*}

\[
\renewcommand{\arraystretch}{1.4}
\begin{array}{lll}
D(\CA(G_{25}),\varrho) 
& \!\!\! = S \big(& \hskip-10pt (6 x^{7} z + 42 x^{4} y^{3} z - 21 x^{4} z^{4})D_x
 + (42 x^{3} y^{4} z + 6 y^{7} z - 21 y^{4} z^{4}) D_y  \\
&  & + (14 x^{6} z^{2} + 28 x^{3} y^{3} z^{2} + 14 y^{6} z^{2} - 14 x^{3} z^{5}
      - 14 y^{3} z^{5} -  z^{8})D_z \big) \\
& \!\!\! +\ S \big( & \hskip-10pt (6 x^{7} y - 21 x^{4} y^{4} + 42 x^{4} y z^{3})D_x+ (42 x^{3} y z^{4} - 21 y^{4} z^{4} + 6 y z^{7})D_z  \\
&     & + (14 x^{6} y^{2} - 14 x^{3} y^{5} -  y^{8} + 28 x^{3} y^{2} z^{3} - 14
     y^{5} z^{3} + 14 y^{2} z^{6}) D_y \big) \\
& \!\!\! +\ S \big( & \hskip-10pt  (x^{8} + 14 x^{5} y^{3} - 14 x^{2} y^{6} + 14 x^{5} z^{3} 
- 28 x^{2} y^{3} z^{3} - 14 x^{2} z^{6})D_x \\
& & + (21 x^{4} y^{4} - 6 x y^{7} - 42 x y^{4} z^{3}) D_y  
+ (21 x^{4} z^{4} - 42 x y^{3} z^{4} - 6 x z^{7})D_z \big).
\end{array}
\]

\begin{align*}
Q(\CA(G_{26}), \varrho) = & \  
(y -  z)^{2}
 (x -  z)^{2}
 (x -  y)^{2}
 (y - \zeta z)^{2}
(x - \zeta z)^{2}
 (x - \zeta y)^{2}
(y + \left(\zeta + 1\right) z)^{2}\\
&  
 (x + \left(\zeta + 1\right) y)^{2}
(x + \left(\zeta + 1\right) z)^{2}
 x^{3} y^{3}z^{3}
 (x + y + z)^{3}
 (x + \left(-\zeta - 1\right) y + \zeta z)^{3}\\
& 
(x + y + \zeta z)^{3}
 (x + y + \left(-\zeta - 1\right) z)^{3}  
 (x + \zeta y + z)^{3}
 (x + \left(-\zeta - 1\right) y + z)^{3}\\
& 
 (x + \zeta y + \left(-\zeta - 1\right) z)^{3} 
 (x + \left(-\zeta - 1\right) y + \left(-\zeta - 1\right) z)^{3}  
 (x + \zeta y + \zeta z)^{3}.
\end{align*}

\[
\renewcommand{\arraystretch}{1.4}
\begin{array}{lll}
D(\CA(G_{26}),\varrho) 
& \!\!\! = S \big(& \hskip-10pt (
11 x^{8} y z + 7 x^{5} y^{4} z + 14 x^{2} y^{7} z + 7 x^{5} y z^{4} + 28 x^{2} y^{4} z^{4} + 14 x^{2} y z^{7})D_x \\
&  & + (14 x^{7} y^{2} z + 7 x^{4} y^{5} z + 11 x y^{8} z + 28 x^{4} y^{2} z^{4} + 7 x y^{5} z^{4} + 14 x y^{2} z^{7})D_y  \\
&  & + (14 x^{7} y z^{2} + 28 x^{4} y^{4} z^{2} + 14 x y^{7} z^{2} + 7 x^{4} y z^{5} + 7 x y^{4} z^{5} + 11 x y z^{8})D_z \big) \\
& \!\!\! +\ S \big( &  \hskip-10pt  (
x^{10} + 8 x^{7} y^{3} + 7 x^{4} y^{6} + 8 x^{7} z^{3} - 112 x^{4} y^{3} z^{3} + 7 x^{4} z^{6})D_x \\
&     & + (7 x^{6} y^{4} + 8 x^{3} y^{7} + y^{10} - 112 x^{3} y^{4} z^{3} + 8 y^{7} z^{3} + 7 y^{4} z^{6}) D_y \\
&     & + (7 x^{6} z^{4} - 112 x^{3} y^{3} z^{4} + 7 y^{6} z^{4} + 8 x^{3} z^{7} + 8 y^{3} z^{7} + z^{10})D_z \big) \\
& \!\!\! +\ S \big( & \hskip-10pt  (
-75 x^{7} y^{6} - 21 x^{4} y^{9} - 12 x^{7} y^{3} z^{3} + 588 x^{4} y^{6} z^{3} - 75 x^{7} z^{6} + 588 x^{4} y^{3} z^{6} - 21 x^{4} z^{9})D_x \\
& & + (14 x^{9} y^{4} - 70 x^{6} y^{7} - 35 x^{3} y^{10} - 5 y^{13} + 28 x^{6} y^{4} z^{3} \\ 
& & \quad + 588 x^{3} y^{7} z^{3} - 40 y^{10} z^{3} + 623 x^{3} y^{4} z^{6} - 110 y^{7} z^{6} - 21 y^{4} z^{9}) D_y  \\
& & + (14 x^{9} z^{4} + 28 x^{6} y^{3} z^{4} + 623 x^{3} y^{6} z^{4} - 21 y^{9} z^{4} - 70 x^{6} z^{7}  \\ 
& & \quad + 588 x^{3} y^{3} z^{7} - 110 y^{6} z^{7} - 35 x^{3} z^{10} - 40 y^{3} z^{10} - 5 z^{13})D_z \big).
\end{array}
\]



\bigskip

\bibliographystyle{amsalpha}

\newcommand{\etalchar}[1]{$^{#1}$}
\providecommand{\bysame}{\leavevmode\hbox to3em{\hrulefill}\thinspace}
\providecommand{\MR}{\relax\ifhmode\unskip\space\fi MR }
\providecommand{\MRhref}[2]{%
  \href{http://www.ams.org/mathscinet-getitem?mr=#1}{#2} }
\providecommand{\href}[2]{#2}


\end{document}